\newtheorem{theorem}{Theorem}
\newtheorem{proposition}[theorem]{Proposition}
\newtheorem{lemma}[theorem]{Lemma}
\newtheorem*{remark*}{Remark}
\newenvironment{proof*}{\vskip 2mm\noindent {}}{\hfill $\Box$ \vskip 2mm}
\newcommand{\B}{\mathbb B}
\newcommand{\C}{\mathbb C}
\newcommand{\D}{\mathbb D}
\newcommand{\PP}{\mathbb P}
\newcommand{\R}{\mathbb R}
\newcommand{\eps}{\varepsilon}
\def\phi{\varphi}
\def\Om{\Omega}
\def\Re{\operatorname{Re}}
\def\Im{\operatorname{Im}}
\begin{document}

\title{``Convex'' characterization of linearly convex domains}

\author{Nikolai Nikolov}
\address{Institute of Mathematics and Informatics\\ Bulgarian Academy of
Sciences\\1113 Sofia, Bulgaria} \email{nik@math.bas.bg }

\author{Pascal J. Thomas}
\address{Universit\'e de Toulouse\\ UPS, INSA, UT1, UTM \\
Institut de Math\'e\-ma\-tiques de Toulouse\\
F-31062 Toulouse, France} \email{pthomas@math.univ-toulouse.fr}

\subjclass[2000]{32F17}

\keywords{(weakly) linearly convex domain}

\begin{thanks}{This paper was written during the stay of the second-named
author at the Institute of Mathematics and Informatics of the
Bulgarian Academy of Sciences (September 2010) supported by a
CNRS--BAS programme ``Convention d'\'echanges'' No 23811.}
\end{thanks}

\begin{abstract} We prove that a $C^{1,1}$-smooth bounded domain $D$ in $\C^n$ is
linearly convex if and only if the convex hull of any
two discs in $D$ with common center lies in $D.$

\end{abstract}

\maketitle

\section{Statements}
\label{state}

Recall that an open set $D$ in $\mathbb C^n$ is called (cf.
\cite{APS,Hor1}):

\begin{itemize}
\item {\it $\mathbb C$-convex} if any non-empty intersection with a
complex line is connected and simply connected;

\item {\it linearly convex} if its complement in $\Bbb C^n$ is a
union of affine complex hyperplanes;

\item {\it weakly linearly convex} if for any $a\in\partial D$
there exists an affine complex hyperplane through $a$ which does
not intersect $D$.
\end{itemize}

Note that the following implications hold:

\centerline{$\Bbb C$-convexity $\Rightarrow$ linear convexity
$\Rightarrow$ weak linear convexity.}

Moreover, these three notions coincide in the case of bounded
domains with $C^1$-smooth boundary (cf.~\cite{APS,Hor1}).

Let now $D$ be an open set in $\C^n,$ $z\in D$ and $X\in\Bbb
C^n.$ Denote by $d_D(z,X)$ the distance from $z$ to $\partial D$
in the complex direction $X$ (possibly $d_D(z,X)=\infty$):
$$
d_D(z,X)=\sup\{r>0:z+\lambda X\in D\hbox{ if
}|\lambda|<r\}.
$$

Note that the following three properties are equivalent:

\begin{itemize}
\item $1/d_D(z,\cdot)$ is a convex function;

\item the maximal circular open subset $D_z$ of $D$ w.r.t $z$ is convex
($1/d_D(z,\cdot)$ is the Minkowski function of $D_z-z$);

\item $D$ contains the convex hull of the union of any two (complex affine)
discs in $D$ with center $z.$
\end{itemize}

By \cite{ZZ} (see also \cite{NPZ}), any weakly linearly convex
open set has these properties.

To see this directly for the third property,
let us compute the linearly convex hull of a union of two discs, which
coincides with its convex hull.

By using linear transformations, we may reduce ourselves to the
case of $K:=(\overline \D \times \{0\}) \cup  (\{0\}\times
\overline \D)$ in $\C^2$. Then, if we identify a complex
hyperplane not passing through $0$ with the coefficients
$(a_1,a_2)$ of its representation as $\{a_1 z_1 + a_2 z_2 =1\}$,
the (polar) set of all hyperplanes not meeting $K$ is $K^*=\D^2$.
Then (see \cite{APS}) the linearly convex hull of $K$ is given by
$$(K^*)^*= \{ z : |z_1|+|z_2| \le 1 \},
$$
which coincides with the convex hull of $K$. (It is also the hull
of $K$ with respect the family of linear-fractional functions
\cite{APS}, since those are constant on complex hyperplanes).
Therefore any weakly linearly convex open set must contain the
convex hull of the union of two affine discs contained in the
domain and intersecting at their common center.

Our aim is to show that the converse is also true in the case of
$C^{1,1}$-smooth bounded domains. We do not know if this
regularity can be weakened. Non-smooth linearly convex domains can
be quite different (they can fail to be $\C$-convex), and
Aizenberg's question is still open: can any $\C$- convex domain be
exhausted by smooth $\C$- convex domains?

\begin{proposition}\label{1} Let $D$ be a $C^{1,1}$-smooth
bounded domain in $\C^n$ and let $U$ be a neighborhood of $\partial D.$ If $D$ contains
the convex hull of any two discs in $D\cap U$ with common center, then $D$ is linearly convex.
\end{proposition}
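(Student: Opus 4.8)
\section{Proof proposal}

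The plan is to reduce the statement to weak linear convexity, set up local coordinates in which the $C^{1,1}$ hypothesis gives uniform quadratic control of a defining function, and then argue by contradiction by exhibiting two discs in $D\cap U$ with a common centre whose convex hull leaves $D$.

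\emph{Reduction.} Since $D$ is bounded with $C^{1}$-smooth boundary, by the equivalences recalled above it suffices to prove that $D$ is weakly linearly convex. Moreover, because $\partial D$ is $C^{1}$, if a complex affine hyperplane $\Pi$ passes through $a\in\partial D$ and misses $D$ then $\Pi$ is a supporting hyperplane, so $\Pi$ lies in the real tangent hyperplane of $\partial D$ at $a$; as $\Pi-a$ is a complex hyperplane, it must coincide with the maximal complex subspace of that real tangent hyperplane, i.e.\ $\Pi=H_a$, the complex tangent hyperplane at $a$. Hence the goal becomes: $H_a\cap D=\emptyset$ for every $a\in\partial D$. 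Assume, to get a contradiction, that this fails for some $a\in\partial D$.

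\emph{Localization and an extremal choice.} Fix a $C^{1,1}$ defining function $\rho$ of $D$. The reason $C^{1,1}$ enters is that near any boundary point, after a unitary change of coordinates placing that point at $0$ with inner normal $-e_n$, one has $D=\{\rho<0\}$ locally with $\rho(z)=\Re z_n+q(z)$ and $|q(z)|\le K|z|^2$ for a constant $K$ governed by the $C^{1,1}$-norm of $\rho$ (equivalently, $D$ satisfies uniform interior and exterior ball conditions). To make the error terms in the construction manageable I would first replace the bad pair $(a,b)$ by an extremal one: among all pairs with $a'\in\partial D$ and $b'\in H_{a'}\cap\overline D$, choose $(a,b)$ maximizing $-\rho(b)$ (the maximum is attained, using compactness of $\partial D$ and of $H_{a'}\cap\overline D$ together with the fact that $a'\mapsto H_{a'}$ is Lipschitz because $\partial D$ is $C^{1,1}$); then $b\in D$, and the maximality yields the a priori bound $-\rho\le -\rho(b)$ \emph{on every complex tangent hyperplane}, which is precisely what controls the quadratic term of $q$ at the end of the construction. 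One then passes to a boundary point $c$ in the relative boundary, inside $H_a$, of the component of $H_a\cap D$ that meets it, and works in local coordinates at $c$; I expect the delicate accounting here to be showing that the relevant slicing hyperplane is (or can be arranged to be) the complex tangent hyperplane at the point where one localizes, rather than merely transversal to $\partial D$ there.

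\emph{The construction.} Work in coordinates $a=0$, $H_a=\{z_n=0\}$, $\rho=\Re z_n+q$, $|q|\le K|z|^2$, with $b=(b',0)\in D$ and $\beta:=-\rho(b)>0$, $b$ small. Choose a small $\sigma$ comparable to $\beta$ and set $p:=b-\sigma e_n\in D$; center both discs at $p$. Take $\Delta_1$ in the normal direction $e_n$, of radius close to $d_D(p,e_n)$: the key geometric point is that $H_a\cap D\neq\emptyset$ forces the boundary of $D$ over $b$ to sit at height $\approx\beta>0$, so $d_D(p,e_n)\approx\sigma+\beta$ up to a cubic error in $|b'|$, and $\Delta_1$ therefore overshoots the hyperplane $\{z_n=0\}$. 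Take $\Delta_2$ in the complex tangential direction $b-a$, with radius large enough to drag the tangential coordinate of $p$ from $b'$ back toward $0$. A convex combination (with weights of order one on each disc) then lands at a point $w=(w',w_n)$ with $\Re w_n>0$ while, using $|q(w)|\le K|w|^2$ together with the extremal bound $\rho\ge -\beta$ along $\{z_n=0\}$, the term $q(w)$ is too small to compensate; hence $\rho(w)>0$, i.e.\ $w\notin D$. Since $b$, hence the whole configuration, can be taken arbitrarily close to $a$, both discs lie in $D\cap U$, contradicting the hypothesis. This completes the plan.

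\emph{Main obstacle.} The hard part is the last step: making it provable that the convex-combination point actually exits $D$ in spite of the quadratic ($C^{1,1}$, not $C^{2}$) error terms, which do not vanish to higher order than the gain obtained from the overshoot. This is exactly why the extremal choice of $(a,b)$ (and possibly an iteration of it) seems necessary, and where the $C^{1,1}$ regularity is used in an essential, non-removable way.
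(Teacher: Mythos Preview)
Your proposal correctly identifies the overall architecture (contradiction via two discs whose convex hull exits $D$) and even correctly names the main obstacle, but the mechanism you propose for overcoming that obstacle---the extremal choice of $(a,b)$---does not do the job, and the approach as written cannot be closed.

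The difficulty is exactly the one you flag: with only $C^{1,1}$ regularity, the remainder $q$ in $\rho=\Re z_n+q$ satisfies $|q|\le K|z|^2$, which is the \emph{same} order as any gain your construction can produce. Your extremal bound ``$-\rho\le\beta$ on every complex tangent hyperplane'' is a one-sided global bound on $\rho$ restricted to those hyperplanes; it gives no pointwise control of $q(w)$ at the convex-combination point $w$, which lies \emph{off} $\{z_n=0\}$. Concretely, since $b\in H_a$ one has $\beta=-q(b)\le K|b'|^2$, so the overshoot $\Re w_n$ you can engineer is at most of order $|b'|^2$, while $|q(w)|$ can be of order $K|w|^2\sim K|b'|^2$ with a constant $K$ you do not control. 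Maximizing $-\rho(b)$ over all pairs does nothing to reduce $K$; the two terms remain of equal order and the sign of $\rho(w)$ is undetermined.

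The paper closes this gap by a different and essential device. It first invokes (and adapts) a theorem of H\"ormander: $D$ is linearly convex as soon as the Behnke--Peschl type inequality
\[
\liminf_{T^{\C}(p)\ni\zeta\to p}\frac{\rho(\zeta)}{|\zeta-p|^2}\ge 0
\]
holds for \emph{almost every} $p\in\partial D$. Hence if $D$ is not linearly convex, this fails on a set of positive surface measure. Combining this with Rademacher's theorem applied to the first partials of the local graph function, one finds a boundary point $p$ at which $\rho$ is genuinely \emph{twice differentiable} and the $\liminf$ above is strictly negative. At such a point there is a true second-order Taylor expansion with $o(|z|^2)$ remainder, and after a rotation the negative $\liminf$ produces a term $-\ell(\Re w)^2$ with $\ell>0$ along the complex tangent direction. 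That strictly negative quadratic term is what beats the remaining $O(|z|^2)$ errors and makes an explicit two-disc construction (with discs tilted so that a midpoint lands at $p$) go through. Without this ``a.e.\ criterion $+$ Rademacher'' step your argument is stuck at equal orders, and no extremal selection recovers the missing $o(|z|^2)$.
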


This can be considered as an analogue of the characterization of
convex domains by line segments, or of pseudoconvex domains by Hartogs figures.

More precisely, it follows from the proof of Lemma \ref{chord}
that a bounded $C^{1,1}$-smooth domain $D$ is not linearly convex
if and only if there are $c\in\partial D$ and a line segment
$[a,b]$ in the complex tangent hyperplane at $c$ such that $c$ is
its midpoint and $[a,b]\setminus\{c\} \subset D $. This is
analogous to the situation for real convexity.

\begin{proposition}
Let $U$ be a neighborhood of the boundary of a domain $D$ in
$\R^n$ such that if $D\cap U$ contains two sides of a triangle,
then it contains the midpoint of the third side. Then $D$ is
convex.
\end{proposition}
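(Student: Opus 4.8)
The plan is to argue by contradiction. Suppose $D$ is not convex; since $D$ is open this is the same as saying $D$ is not midpoint‑convex, so there are $a_0,b_0\in D$ with $(a_0+b_0)/2\notin D$. I want to turn this into a configuration contradicting the hypothesis: a triangle with vertices $A,B,C$ for which $[A,B]\cup[A,C]\subset D\cap U$ but $(B+C)/2\notin D$.

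The first — and main — step is to migrate the failure of convexity to the boundary: produce a point $c\in\partial D$ together with, for all small $\rho>0$, points $a,b\in D\cap B(c,\rho)$ such that $c=(a+b)/2$ and $[a,b]\setminus\{c\}\subset D$. This is precisely the real‑variable counterpart of the chord description recorded after Proposition~\ref{1}, and I would obtain it by the mechanism used in the proof of Lemma~\ref{chord}: start from $(a_0,b_0)$ and push the segment $[a_0,b_0]$ against $\partial D$ while keeping its midpoint in the complement, or, equivalently, invoke a Tietze‑type fact (a connected open set which is convex near each of its boundary points is convex) to locate a boundary point at which $D$ fails to be convex at every scale. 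The delicate point is that $c$ must be selected so that the non‑convexity is ``one‑sided'', e.g. so that $D\cap B(c,\rho)$ is connected for small $\rho$: at an interior point of a slit removed from $\R^n$ the local picture is disconnected and cannot be filled into a triangle, even though such a point is the midpoint of a chord of $D$.

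Granting such a $c$, the second step is to fatten the (collinear, hence degenerate) triple $a,c,b$ into a genuine triangle. Using that $D$ is open along the interiors of $[a,c]$ and $[c,b]$, together with the one‑sided room at $c$, I choose $a,b$ close to $c$ and then a vertex $A\in D$ close to $c$ but off the line through $a,b$ — morally, on the ``concave side'' of $\partial D$ at $c$ — and check that $[A,a]$ and $[A,b]$ remain inside $D$; this is exactly what the careful choice of $c$ makes possible. Since $A,a,b$ then all lie in a small ball about $c\in\partial D$, that ball is contained in $U$, so $[A,a]\cup[A,b]\subset D\cap U$ while the midpoint of the third side $[a,b]$ equals $c\notin D$, contradicting the hypothesis. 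Hence $D$ is convex.

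I expect the genuine obstacle to be the careful form of the first step: not the soft fact that a non‑convex domain has a boundary point which is the midpoint of one of its chords, but the production of such a point at which the configuration can actually be thickened — i.e. ruling out that $D$ is pinched or locally disconnected near every obvious candidate. A clean way to organize this is to first pass to $n=2$ (the triangle and the chord lie in a plane) and analyse the planar picture near a boundary point where $D$ first fails to be convex; an alternative is to blow $D$ up at $c$ and extract a limiting configuration in which the chord and the admissible direction for $A$ become explicit. Once the right $c$ has been isolated, the remaining estimates are elementary.
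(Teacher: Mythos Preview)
Your outline is essentially the paper's argument: assume $D$ is not convex, locate $c\in\partial D$ which is the midpoint of a chord $[a,b]$ with $[a,b]\setminus\{c\}\subset D$ (the paper cites \cite[Theorem~2.1.27]{Hor1} for this), then find a nearby point $d\in D$ off the line with $[a,d],[b,d]\subset D$ to produce the forbidden triangle. The one substantive difference is in how the ``thickening'' is handled: where you worry about one-sided room and local connectedness and sketch a blow-up or planar reduction, the paper disposes of the issue in one stroke by touching $D$ from inside at $c$ by a \emph{smooth} domain that is not convex at $c$ and then invoking \cite[Theorem~2.1.27]{Hor1} once more inside that smooth domain --- smoothness automatically provides the transverse direction for $d$, so the pinching scenarios you are concerned about never arise.
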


\noindent{\it Sketch of the proof.} Assume that $D$ is not convex. Then
there is a boundary point $c$ which is the midpoint of a segment
$[a,b]$ with $[a,b]\setminus\{c\}\subset D$ (cf.~\cite[Theorem
2.1.27]{Hor1}). To get a contradiction, it is enough to find $d
\in D$ near $c$ such that $[a,d]\subset D$ and $[b,d]\subset D$.
This follows by \cite[Theorem 2.1.27]{Hor1}, since we may touch
$D$ at $z$ from inside by a smooth domain not convex at $c$.
\smallskip

\section{Proofs}
\label{proofs}

\begin{proof*}{\it Proof of Proposition \ref{1}.}
First we need a slight modification of \cite[Theorem 1.4]{Hor2}.

\begin{proposition}
\label{Horrev}
Suppose that $D$ has a $C^{1,1}$-smooth boundary, with defining
function $\rho$. If for almost every $p\in\partial D$
\begin{equation}
\label{Hor13}
\liminf_{T^{\C}(p) \ni \zeta \to p} \frac{\rho(\zeta)}{|\zeta-p|^2}\ge 0,
\end{equation}
where $T^{\C}(p)$ denotes the largest complex affine subspace 
passing through $p$ and contained in the real affine tangent space to
$\partial D$, 
then $D$ is linearly convex.
\end{proposition}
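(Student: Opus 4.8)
\noindent {\it Sketch of the proof of Proposition \ref{Horrev}.}
The plan is to reduce the assertion to a property of the complex tangent hyperplanes $T^{\C}(p)$, $p\in\partial D$, and then to re-run, in the present almost-everywhere setting, the extremal argument behind \cite[Theorem 1.4]{Hor2}. First, since $D$ is bounded with $C^1$-smooth boundary, linear convexity coincides with weak linear convexity (Section~\ref{state}), so it suffices to prove that $T^{\C}(a)\cap D=\emptyset$ for every $a\in\partial D$; equivalently, since a complex line contained in the real tangent hyperplane $T_a\partial D$ is automatically contained in $T^{\C}(a)$, it suffices to prove that no complex line tangent to $\partial D$ meets $D$. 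We may also take $\rho$ to be a global $C^{1,1}$ defining function, negative precisely on $D$: near $\partial D$ this alters $\rho$ only by a positive $C^{1,1}$ factor, which merely multiplies the restriction to the tangent space of the real Hessian of $\rho$ at a boundary point by that factor, and so leaves \eqref{Hor13} unaffected.

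Next I would record the infinitesimal content of \eqref{Hor13}. By Rademacher's theorem applied to $\nabla\rho$, the function $\rho$ has a second-order Taylor expansion at almost every point; at such a point $p\in\partial D$ at which \eqref{Hor13} also holds, the first-order term of that expansion vanishes on the directions tangent to $T^{\C}(p)$ (these lie in $T_p\partial D$), so \eqref{Hor13} says precisely that the real Hessian of $\rho$ at $p$ is positive semidefinite on $T^{\C}(p)-p$. This is the hypothesis of \cite[Theorem 1.4]{Hor2}, except that there it is imposed at \emph{every} boundary point; thus all that must be checked is that the conclusion survives when the condition is known only on a set of full surface measure --- equivalently, on a dense subset of $\partial D$.

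For the argument itself I would proceed by contradiction. If $D$ is not weakly linearly convex, the set $\mathcal P=\{(p,\zeta):p\in\partial D,\ \zeta\in\overline D\cap T^{\C}(p)\}$ is compact, the map $(p,\zeta)\mapsto-\rho(\zeta)$ is continuous and nonnegative on it and, by assumption, positive somewhere; let $M>0$ be its maximum, attained at $(p_*,\zeta_*)$, so that $\rho(\zeta_*)=-M$ and $\zeta_*\in D$. Since $\zeta_*$ is an interior maximum of $-\rho$ restricted to the complex hyperplane $T^{\C}(p_*)$, the restricted differential vanishes there, so the real gradient $\nabla\rho(\zeta_*)$ is orthogonal to $T^{\C}(p_*)-p_*$; hence the level set $\{\rho=-M\}$ is tangent to $T^{\C}(p_*)$ at $\zeta_*$, while $\rho\ge-M$ on all of $T^{\C}(p_*)$ by maximality. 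One then transports this tangency from the interior point $\zeta_*$ back to $\partial D$ --- letting the level value increase from $-M$ to $0$ while following a tangent complex hyperplane --- so as to produce a boundary point $c$, a complex line in $T^{\C}(c)$ through $c$, and a point of $D$ on that line near $c$ whose presence contradicts \eqref{Hor13} at $c$; if necessary one first nudges $c$ into the full-measure set where \eqref{Hor13} holds, using the uniform bound on $D^2\rho$ to absorb the nudge.

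The step I expect to be the genuine obstacle is precisely this transport. Compactness furnishes an \emph{interior} extremal point $\zeta_*$, whereas \eqref{Hor13} constrains only $\partial D$; carrying the second-order information out to an honest boundary point --- and to one at which \eqref{Hor13} is actually known --- is where the work concentrates, and where the $C^{1,1}$-regularity is indispensable: it legitimizes the second-order expansions almost everywhere and, just as importantly, yields the uniform estimates needed to move the base point off the exceptional null set without destroying the contradiction. Carrying this out is the analogue, for the family of complex tangent hyperplanes, of H\"ormander's parallel-chord argument in the proofs of \cite[Theorem 1.4]{Hor2} and \cite[Theorem 2.1.27]{Hor1}; everything else is the formal reduction above.
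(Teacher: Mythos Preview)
Your sketch diverges substantially from the paper's proof, and the part you yourself flag as the obstacle is a genuine gap, not a routine step.

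The paper does not argue by locating an extremal boundary point where \eqref{Hor13} fails. Instead it recasts everything in terms of the squared distance $h(z)=\operatorname{dist}(z,\partial D)^2$. H\"ormander's framework (\cite[Proposition~1.5, Theorem~2.4]{Hor2}) reduces linear convexity of $D$ to \emph{quadratic concavity} of $h$ on balls near $\partial D$, which in turn follows from a second-order differential inequality on $h$. The $C^{1,1}$ hypothesis is used to guarantee positive reach, so that the nearest-point projection $\pi:\{0<h<\delta^2\}\to\partial D$ is well defined and $h\in C^{1,1}$. Then \eqref{Hor13} at $\zeta\in\partial D$ implies the required second-order inequality for $h$ at every $z$ with $\pi(z)=\zeta$; since $\pi^{-1}$ of a surface-null set is Lebesgue-null, the inequality holds a.e.\ in a neighborhood of $\partial D$. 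The passage from ``a.e.'' to ``everywhere'' is handled by mollification: one subtracts $\eps(|x|^2+1)$ to make the inequality strict, convolves with a smoothing kernel (integration of an a.e.\ inequality against a nonnegative kernel preserves it), applies H\"ormander's smooth-case result, and lets $\eps\to0$. No extremal point on $\partial D$ is ever singled out.

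Your route, by contrast, tries to manufacture a single boundary point $c$ at which \eqref{Hor13} fails. Two concrete problems arise. First, your extremal argument yields an \emph{interior} point $\zeta_*$ at which $\rho|_{T^{\C}(p_*)}$ has a minimum, so the restricted Hessian there (when it exists) is positive semidefinite --- the wrong sign for a contradiction --- and you give no mechanism for the ``transport'' that would convert this into a second-order violation at some $c\in\partial D$; the analogy with the parallel-chord argument of \cite[Theorem~2.1.27]{Hor1} does not carry over, and \cite[Theorem~1.4]{Hor2} itself is proved via $h$, not via chords. Second, even if you did produce such a $c$, the ``nudge'' into the full-measure set is unjustified: the violation of \eqref{Hor13} at $c$ could have arbitrarily small margin, so the uniform bound on $D^2\rho$ gives you no control over how far you may move $c$ before the violation disappears. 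An a.e.\ hypothesis cannot be contradicted by exhibiting failure at a single point, and there is nothing in your argument forcing failure on a set of positive surface measure. The regularization approach in the paper sidesteps both issues entirely.
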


Notice that \cite[Theorem 1.4]{Hor2} has the same conclusion with
slightly different hypotheses: it demands a little less boundary
regularity of $D$, but assumes the inequality \eqref{Hor13}
everywhere instead of almost everywhere. When $D$ has a
$C^2$-smooth boundary, the second partials are defined and
continuous everywhere, and so \eqref{Hor13} holds everywhere and
there is nothing more to prove.

In Section \ref{appendix}, we shall recall the steps of H\"ormander's
proof and give the small modifications needed to adapt it to our context.

Assume to get a contradiction that $D$ is not linearly convex. By
the Implicit Function Theorem, we can choose local coordinates such
that the boundary of $D$ can be written locally as a graph. By
Radema\-cher's theorem about the differentiability almost
everywhere of Lipschitz functions, applied to the first partial
derivatives of the function
defining the graph in each coordinate patch, we may find a point
$p\in\partial D$ such that $\rho$ is twice differentiable at $p$
and
\begin{equation}\label{BP}
\liminf_{T^{\C}(p) \ni \zeta \to p} \frac{\rho(\zeta)}{|\zeta-p|^2}<0.
\end{equation}
From now on, we choose such a $p.$

It is easy to show that the property we are studying
can be tested on two-dimensional subspaces,
so henceforth we assume that $\Om\subset\C^2.$

\begin{lemma}
\label{chord}
Under the above hypotheses, there exist $r>0,$ $c\ge 1$
and coordinates $(z,w)$ obtained by a complex affine transformation from the
original coordinates such that $(z(p),w(p))=(0,0)$ and
$$
\Om\cap\B_2(0,r)\supset E:=\left\{(z,w)\in\C^2:\rho_c(z,w)<0\right\}\cap\B_2(0,r),
$$
where $\rho_c(z,w)=\Re z-(\Re w)^2+c|z|^2+c(\Im w)^2.$
\end{lemma}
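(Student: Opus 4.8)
\medskip
\noindent\emph{Plan of the proof of Lemma \ref{chord}.}
The plan is to normalise $\partial\Om$ at $p$ to second order, then perform an anisotropic dilation under which the part of $\Om$ near $p$ converges to a region slightly larger than $E$, all the error terms becoming negligible in the process.

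First I would bring $\rho$ into a normal form. Translating $p$ to $0$ and applying a unitary map, I may assume $\rho(p)=0$ and, after scaling $\rho$ by a positive constant, $d\rho_0=d(\Re z)$, so that the real tangent space is $\{\Re z=0\}$ and $T^{\C}(p)=\{z=0\}$. Since $\rho$ is twice differentiable at $0$,
\[
\rho(z,w)=\Re z+q(z,w)+o(|(z,w)|^2),\qquad q:=\tfrac12 D^2\rho_0 ,
\]
and \eqref{BP} says exactly that the restriction of the real quadratic form $q$ to $\{z=0\}$, viewed as a form in $(\Re w,\Im w)$, takes a negative value. Rotating the $w$-variable I diagonalise that restriction and place its negative eigenvalue on the $\Re w$-axis, obtaining $q(0,0,s,t)=-2as^2+dt^2$ with $a>0$ and $d\ge -2a$ (hence $d/a\ge -2$); none of these moves changes the linear part $\Re z$.

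Next I dilate: replace $(z,w)$ by $(\kappa z,\mu w)$ with $\mu^2=a\kappa$ and take $\kappa\,\rho(\cdot/\kappa,\cdot/\mu)$ as the new defining function $\tilde\rho$. This keeps the linear term equal to $\Re z$, turns the part of $q$ along $\{z=0\}$ into $-2(\Re w)^2+(d/a)(\Im w)^2$, multiplies the pure-$z$ part $q_{zz}$ and the $z$--$w$ cross part $q_{zw}$ of $q$ by $1/\kappa$ and $1/\mu$ respectively, and turns the remainder into a term bounded on $\B_2(0,r)$ by $\tau\bigl(\tfrac{|z|^2}{\kappa}+\tfrac{(\Re w)^2+(\Im w)^2}{a}\bigr)$, where $\tau=\tau(\kappa,r)\to0$ as $\kappa\to\infty$ or $r\to0$ — indeed the dilation carries $\B_2(0,r)$ into a ball about $p$ of radius $O(r/\sqrt\kappa)$, on which the $o(|\cdot|^2)$ remainder of $\rho$ is controlled. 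Thus in the new coordinates $\tilde\rho(z,w)=\Re z-2(\Re w)^2+(d/a)(\Im w)^2+R(z,w)$, with $R$ a finite sum of terms each carrying a factor $1/\kappa$, $1/\mu$ or $\tau$.

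Finally I fix the parameters: set $c:=\max\{2,\ d/a+2\}\ (\ge1)$. Then
\[
\rho_c-\tilde\rho=(\Re w)^2+c\,|z|^2+\Bigl(c-\tfrac{d}{a}\Bigr)(\Im w)^2-R(z,w),
\]
and the three displayed coefficients $1$, $c$, $c-d/a$ are strictly positive. Bounding $R$ by the estimate above (and $|q_{zw}(z,w)|\le C_1\bigl(|z|^2+(\Re w)^2+(\Im w)^2\bigr)$), every summand of $-R$ has coefficients that tend to $0$ as $\kappa\to\infty$ and $r\to0$; hence for $\kappa$ large and then $r$ small the quadratic form $\rho_c-\tilde\rho$ is nonnegative on $\B_2(0,r)$, so, shrinking $r$ once more so that $\Om=\{\tilde\rho<0\}$ there, $E=\{\rho_c<0\}\cap\B_2(0,r)\subset\{\tilde\rho<0\}\cap\B_2(0,r)=\Om\cap\B_2(0,r)$, which is the claim. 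The one point that needs genuine care is the remainder $R$: the coefficient it contributes to $(\Re w)^2$ is \emph{not} killed just by letting $\kappa\to\infty$ (only its $\tau$-factor is), and this is exactly why the dilation is rigged so that $\tilde\rho$ has $(\Re w)^2$-coefficient $-2$ rather than $-1$ — the genuine $+(\Re w)^2$ this produces in $\rho_c-\tilde\rho$ is precisely what absorbs it. Everything else is routine estimation of quadratic forms, so this is the main (and essentially only) obstacle.
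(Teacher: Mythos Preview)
Your argument is correct. Both you and the paper begin with the same normalisation (translate $p$ to $0$, take $\nabla\rho(0)$ in the $\Re z$ direction, rotate $w$ so that the restriction of the Hessian to $\{z=0\}$ is diagonal with negative $(\Re w)^2$-coefficient), and both aim for an inequality $\rho\le\rho_c$ on a small ball. The difference is in how the cross terms and the $o(|\cdot|^2)$ remainder are disposed of.

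The paper keeps the coordinates fixed after the rotation and kills the $zw$ cross terms by a single Young inequality $|a_{12}zw+b_{12}z\bar w|\le\frac{c_1}{2}(\varepsilon|w|^2+\varepsilon^{-1}|z|^2)$, with $\varepsilon$ chosen so that the $(\Re w)^2$-coefficient stays strictly negative; then only $w$ is rescaled, by the fixed factor $\sqrt{\ell/3}$, to bring that coefficient to $-1$. The constant $c$ that emerges depends explicitly on all second-order coefficients of $\rho$ at $p$. This is shorter and completely explicit.

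Your route is an anisotropic blow-up: rescale $(z,w)\mapsto(z/\kappa,w/\mu)$ with $\mu^2=a\kappa$ and send $\kappa\to\infty$, so that the pure-$z$ quadratic part, the cross terms, and the $o(|\cdot|^2)$ remainder all acquire vanishing coefficients, while the $(\Re w)^2$-coefficient stabilises at $-2$; then $\rho_c-\tilde\rho$ is manifestly a positive form for large $\kappa$. This packages the estimate as convergence to a model domain and yields a $c$ depending only on the ratio $d/a$, at the cost of one extra limiting parameter. Your remark that the ``extra'' $(\Re w)^2$ (coming from $-2$ rather than $-1$) is what absorbs the remainder is exactly the counterpart of the paper's choice of $\varepsilon$; in fact, since $\tau(\kappa,r)\to0$ already as $\kappa\to\infty$ for fixed $r$, the absorption would succeed with any fixed positive slack, so this point is slightly less delicate than you suggest.
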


\begin{proof}
It will be enough to majorize $\rho$ by $\rho_c$ for some $c>0$ 
when $(z,w)$ is close enough to $p$.

First take coordinates $(z_1,w_1)$ such that
$(z_1(p),w_1(p))=(0,0)$ and the real gradient $\nabla \rho (0,0) =
(1,0)$. Then by Taylor's formula,
\begin{multline*}
\rho(z_1,w_1) = \Re z_1 + \Re \left( a_{11} z_1^2 +  a_{12} z_1
w_1 + a_{22} w_1^2 \right)
\\
+ b_{11} |z_1|^2 + \Re (b_{12} z_1 \bar w_1) + b_{22} |w_1|^2 +
o(|z_1|^2 + |w_1|^2 ),
\end{multline*}
where the coefficients $a_{ij}, b_{ij}$ are deduced from the
second order partial derivatives of $\rho$ at $(0,0)$ in the usual
way.

Here $T^{\C}(p)= \{(0,w), w \in \C\}$, and
$$
\frac{\rho(0,w_1)}{|(0,w_1)-p|^2} = \Re\left( a_{22}
\frac{w_1^2}{|w_1|^2}\right) +  b_{22} +o(1),
$$
and the $\liminf$ in \eqref{BP} is exactly $-|a_{22}|+b_{22}=:
-\ell$.

We rotate the $w_1$ coordinate so that $a_{22}=-|a_{22}|$, thus
$$
\rho(0,w_1) = -\ell (\Re w_1)^2 + (2b_{22}+\ell) (\Im w_1)^2
+o(|w_1|^2 ).
$$
To estimate the other terms,
$$
\left| a_{12} z_1 w_1 + b_{12} z_1 \bar w_1 \right| \le c_1|z_1 w_1
| \le \frac12 c_1(\eps |w_1|^2 + \frac1\eps |z_1|^2);
$$
we choose $\eps$ so that $c_1\eps \le \ell$,
$c_2:= |a_{11}| + |b_{11} |$, so
\begin{multline*}
\rho(z_1,w_1) < \Re z_1 + (c_2 + \frac{c_1}{2\eps}) |z_1|^2 -
\frac{\ell}2 (\Re w_1)^2 + (\frac{\eps c_1}2 + 2b_{22}+\ell) (\Im w_1)^2
\\+o(|z_1|^2 + |w_1|^2 )
\le \Re z_1 + c_3 |z_1|^2 - \frac{\ell}3 (\Re w_1)^2 + c_4 (\Im
w_1)^2
\end{multline*}
for $(z_1,w_1)$ small enough.

Taking $z=z_1$, $w= \sqrt{\frac{\ell}3} w_1$, we have the required
form.
\end{proof}

Further, choose $r>0$ such that $\Bbb B_2(0,r)\subset U$ and
suppose that we have two disks in $E$ of the form
$$D_1=\{ (-\delta(1-\zeta),\delta \zeta/\mu), |\zeta|\le 1\},
\quad D_2=\{(-\delta (1+\zeta),\delta \zeta/\mu), |\zeta|\le 1\},$$
where $\mu=\sqrt{2c\delta}.$ Then by taking
the midpoint of $(0, \delta /\mu)$ ($\zeta=1$ for $D_1$)
and $(0,-\delta /\mu)$ ($\zeta=-1$ for $D_2$) we find
$(0,0)$, which is a contradiction, proving Proposition \ref{1}.

One can see by changing $\zeta$ into $-\zeta$ that it is enough to check that
$D_1\subset E.$ It is clear that $D_1\subset\B(0,r)$ for any small $\delta>0.$
It remains to show that $\rho_c|_{D_1}<0$ on $D_1$. Since $c\ge 1,$ this restriction is
subharmonic and so it suffices to prove the inequality for $\zeta=e^{i\theta}$.

Dividing through by $\delta$, we have to verify
$$
-1+ \cos \theta - \frac1{2 c} \cos^2 \theta + \frac12 \sin^2
\theta + 2 c \delta(1-\cos \theta) < 0,\quad \theta \in \mathbb R,
$$
equivalently
$$
-1 +4 c \delta+ 2(1 - 2 c \delta)x  - (1+\frac1{ c})x^2
<0,\quad -1\le x \le 1.
$$
Computing the (reduced) discriminant of this quadratic polynomial
yields
$$
(1 - 2 c \delta)^2-(1+\frac1{ c}) (1 -4 c \delta) = 4
c^2 \delta^2 + 4 \delta - \frac1c <0
$$
for $\delta>0$ small enough.

The proof of Proposition \ref{1} is completed.
\end{proof*}

\section{Appendix}
\label{appendix}
\begin{proof*}{\it Proof of Proposition \ref{Horrev}.}
First, the hypothesis and conclusion of \cite[Theorem 1.4]{Hor2} are
restated in terms of the function
$$
h(z):= \inf_{w\in \partial D}|z-w|^2, \quad z \in D.
$$
For any $\zeta \in \partial D$ at which this infimum is attained,
$z$ belongs to the normal to $\partial D$ at $\zeta$. For any $z$
for which the infimum is attained at a unique point $\zeta \in
\partial D$, we denote $\pi(z)=\zeta$ this nearest point. Then $h$
is differentiable at any point $z$ where $\pi(z)$ is well defined.

The property that $\partial D$ admits a neighborhood in which the
nearest point in uniquely defined is called ``positive reach" in
\cite{Fed}. When $\partial D$ is $\mathcal C^{1,1}$ smooth (i.e.
$D$ admits a $\mathcal C^{1,1}$-smooth defining function with non
vanishing gradient near the boundary), it is of positive reach, as
follows for instance from the remark at the beginning of
\cite[Section 4]{Fed}, ``the class of sets with positive reach is
closed under bi-Lipschitzian maps
 with Lipschitzian differentials'' (or see the note added in proof at
 the end of \cite{Kr-Pa}).

It is proved in \cite{Kr-Pa} that when $\partial D$ has positive reach
and is $\mathcal C^1$, then $h \in \mathcal C^1$ and its first partial derivatives
are given explicitly in terms of the first partial derivatives of
the defining function in an appropriate
coordinate system, see equations (3) and (4) on \cite[p. 118]{Kr-Pa}.
Since the $\mathcal C^{1,1}$ property does not depend on the coordinate system,
it follows from the formulae that if $\rho $ has Lipschitz continuous first partial
derivatives, then so does $h$, i.e.
$h \in \mathcal C^{1,1}$.
Note that for any $\eps>0$ there exist
domains with  $C^{2-\eps}$-smooth boundary which do not have positive reach,
and where $h$ fails to be differentiable in a neighborhood of the boundary,
so $\mathcal C^{1,1}$ smoothness is a kind of minimal hypothesis we have to
demand using these methods.

It is also noted in a remark after the statement of \cite[Theorem
1.4]{Hor2} that when $\partial D$ is $\mathcal C^{1,1}$-smooth,
the regularity hypotheses of the theorem, i.e. the interior ball
condition (which implies positive reach) and condition
\cite[(1.5)]{Hor2}, are in particular fulfilled.

\cite[Proposition 1.5]{Hor2} shows that $D$ is linearly convex if and only if
\begin{equation}
\label{Hor16}
h(w) \le h(z) + 2 \Re \langle w-z,h'_z(z)\rangle +
\left| \langle  w-z,h'_z(z)\rangle \right|^2/h(z), w \in D,
\end{equation}
for every $z$ at which $h$ is differentiable.

Then the proof of \cite[Theorem 1.4]{Hor2}, given at the end of
\cite[Section 2]{Hor2}, proceeds as follows: given a $\delta>0$
such that $h$ is $\mathcal C^1$ on $\D_\delta := \{ h < \delta^2 \}$,
choose for each $\zeta \in \partial D$ a ball $B_{\delta, \zeta}$
of radius $\delta/2$, tangential to $\partial D$ at $\zeta$ (and thus
contained in $\D_\delta$). It is shown that $D$ is linearly convex if
\eqref{Hor16} holds on each $B_{\delta, \zeta}$. For that it is enough
to show that $h$ is \emph{quadratically concave}, a notion defined by
\cite[Definition 2.1]{Hor2}.

\cite[Theorem 2.4]{Hor2} shows that when a positive function $g$ is
defined on a ball $B\subset R^N$ and its second derivatives are measures,
a sufficient
condition for it to be quadratically concave is that
\begin{equation}
\label{Hor26}
\langle g''(z) v, v \rangle \le \frac12 |v|^2 |g'(z)|^2/g(z) ,
\end{equation}
for any
$z \in B$, $v \in \mathbb R^N$.

We need to show that the conclusion of \cite[Theorem 2.4]{Hor2}
still holds under our new hypotheses.

\cite[Proposition 1.6]{Hor2}
shows that \eqref{Hor13} at $\zeta \in \partial D$
implies
\begin{equation}
\label{Hor17}
\limsup_{w\to 0} \frac1{|w|^2}
\left( h(z+w) - h(z) - 2 \Re \langle w, h'_z(z) \rangle
- \frac{|\langle w, h'_z(z) \rangle|^2}{h(z)} \right) \le 0
\end{equation}
 at any point $z$
at which $h$ is differentiable,
 and such that $\zeta=\pi(z)$. Since $z$ must lie in the normal
 to $\partial D$ at $\zeta$ and $\partial D \in \mathcal C^{1,1}$,
 the implicit function theorem shows that if $N \subset \partial D$
 is of $(2n-1)$-Lebesgue measure zero, then in a neighborhood of
 $\partial D$,  $\pi^{-1}(N)$ is of $2n$-Lebesgue measure zero.
So our hypothesis implies that \eqref{Hor17} holds for almost every $z$
in a neighborhood of the boundary. Applying Taylor's formula,
we see that this implies that \eqref{Hor26} holds for a.e. $z$
in this neighborhood and any vector $v$.  So our hypotheses
differ from H\"ormander's only in that the second derivatives
of our function are measures given by a.e. defined bounded functions,
and by requiring a second-order differential inequality almost everywhere
instead of everywhere.

\cite[Theorem 2.4]{Hor2} shows that $g$ is quadratically concave by
proving that
\begin{equation}
\label{Hor22}
g(y) \le g(x) + \langle y-x, g'(x) \rangle + \frac14 |y-x|^2
\frac{|g'(x)|^2}{g(x)}, x,y \in B,
\end{equation}
which is enough by \cite[Theorem 2.2]{Hor2}.

Assuming that $B$ is the unit ball, if we prove \eqref{Hor22}
for $g(x)=h_r(x):=h(x)$, and $0<r<1$, we obtain the same inequality
for $h=h_1$, using the fact that $h\in \mathcal C^1$ to pass to the
limit.  So we may now assume that $g\ge \eps_0>0$. Then we follow
part (c) of the proof of \cite[Theorem 2.4]{Hor2}: writing
$g_\eps (x) := g(x)- \eps (|x|^2+1)$ for small enough $\eps>0$,
we have that
\begin{equation*}
\langle g_\eps'' t,t \rangle \le
\left( \frac12 |g_\eps'(x)|^2/g_\eps(x) - 2\eps/(|x|^2+1) \right) |t|^2,
\end{equation*}
and so a $\mathcal C^\infty$ regularization of $g_\eps$ will
satisfy \eqref{Hor26} with strict inequality in $\overline B$ when
$|t|=1$ (since the second order differential inequality is
obtained by integrating against the regularizing kernel bounded
functions that satisfy the inequality almost everywhere). By parts
a) and b) of the proof of \cite[Theorem 2.4]{Hor2} (which are
valid for $\mathcal C^2$-smooth functions), this regularization
will satisfy \eqref{Hor22}, and letting the regularization tend to
$g$, we obtain \eqref{Hor22} for $g$.

\end{proof*}

\end{document}